
\documentclass{amsart}
\usepackage{amscd,amsmath,xypic,amssymb,combelow,tikz-cd,etoolbox,calligra,mathrsfs,stmaryrd,enumitem}

\DeclareMathOperator{\sHom}{\mathscr{H}\text{\kern -3pt {\calligra\large om}}\,}
\DeclareMathOperator{\sRHom}{\mathscr{RH}\text{\kern -3pt {\calligra\large om}}\,}
\DeclareMathOperator{\sQuot}{\mathscr{Q}\text{\kern -3pt {\calligra\large uot}}\,}

\makeatletter
\patchcmd{\@settitle}{\uppercasenonmath\@title}{}{}{}
\makeatother

\xyoption{all}
\CompileMatrices

\emergencystretch=2cm
\newcommand{\nc}{\newcommand}

\makeatletter
\@addtoreset{equation}{section}
\makeatother

\newtheorem{theorem}[subsection]{Theorem}
\newtheorem{proposition}[subsection]{Proposition}

\newtheorem{definition}[subsection]{Definition}

\nc{\fa}{{\mathfrak{a}}}
\nc{\fb}{{\mathfrak{b}}}
\nc{\fd}{{\mathfrak{d}}}
\nc{\fg}{{\mathfrak{g}}}
\nc{\fh}{{\mathfrak{h}}}
\nc{\fj}{{\mathfrak{j}}}
\nc{\fn}{{\mathfrak{n}}}
\nc{\fm}{{\mathfrak{m}}}
\nc{\fu}{{\mathfrak{u}}}
\nc{\fp}{{\mathfrak{p}}}
\nc{\fr}{{\mathfrak{r}}}
\nc{\ft}{{\mathfrak{t}}}
\nc{\fsl}{{\mathfrak{sl}}}
\nc{\fgl}{{\mathfrak{gl}}}
\nc{\hsl}{{\widehat{\mathfrak{sl}}}}
\nc{\hgl}{{\widehat{\mathfrak{gl}}}}
\nc{\hg}{{\widehat{\mathfrak{g}}}}
\nc{\chg}{{\widehat{\mathfrak{g}}}{}^\vee}
\nc{\hn}{{\widehat{\mathfrak{n}}}}
\nc{\chn}{{\widehat{\mathfrak{n}}}{}^\vee}

\nc{\Mod}{{\textrm{Mod}}}

\nc{\wGL}{{\widehat{GL}^+}}

\nc{\BA}{{\mathbb{A}}}
\nc{\BC}{{\mathbb{C}}}
\nc{\BG}{{\mathbb{G}}}
\nc{\BM}{{\mathbb{M}}}
\nc{\BN}{{\mathbb{N}}}
\nc{\BF}{{\mathbb{F}}}
\nc{\BH}{{\mathbb{H}}}
\nc{\BP}{{\mathbb{P}}}
\nc{\BQ}{{\mathbb{Q}}}
\nc{\BR}{{\mathbb{R}}}
\nc{\BZ}{{\mathbb{Z}}}

\nc{\ff}{{\mathbb{F}}}
\nc{\kk}{{\mathbb{K}}}
\nc{\kko}{{\mathbb{K}}}

\nc{\coh}{{\text{Coh}}}

\nc{\CA}{{\mathcal{A}}}
\nc{\CC}{{\mathcal{C}}}
\nc{\CB}{{\mathcal{B}}}
\nc{\DD}{{\mathcal{D}}}
\nc{\CE}{{\mathcal{E}}}
\nc{\CF}{{\mathcal{F}}}
\nc{\tCF}{{\widetilde{\CF}}}
\nc{\tCM}{{\widetilde{\CM}}}
\nc{\tCT}{{\widetilde{\CT}}}
\nc{\oCF}{{\bar{\CF}}}
\nc{\CG}{{\mathcal{G}}}
\nc{\CL}{{\mathcal{L}}}
\nc{\CK}{{\mathcal{K}}}
\nc{\CI}{{\mathcal{I}}}
\nc{\CM}{{\mathcal{M}}}
\nc{\CH}{{\mathcal{H}}}
\nc{\CN}{{\mathcal{N}}}
\nc{\CO}{{\mathcal{O}}}
\nc{\CP}{{\mathcal{P}}}
\nc{\CR}{{\mathcal{R}}}
\nc{\CQ}{{\mathcal{Q}}}
\nc{\CS}{{\mathcal{S}}}
\nc{\CT}{{\mathcal{T}}}
\nc{\tCU}{{\widetilde{\CU}}}
\nc{\CU}{{\mathcal{U}}}
\nc{\CV}{{\mathcal{V}}}
\nc{\CW}{{\mathcal{W}}}
\nc{\CZ}{{\mathcal{Z}}}

\nc{\tpsi}{{\widetilde{\Psi}}}
\nc{\wpi}{{\widetilde{\pi}}}
\nc{\Ker}{{\text{Ker }}}
\nc{\Coker}{{\text{Coker }}}

\nc{\CX}{{\mathcal{X}}}
\nc{\tCX}{{\widetilde{\mathcal{X}}}}
\nc{\CY}{{\mathcal{Y}}}
\nc{\tCY}{{\widetilde{\mathcal{Y}}}}

\nc{\tN}{{\widetilde{\CN}}}
\nc{\pN}{{\BP\widetilde{\CN}}}

\nc{\tT}{{T}}

\nc{\fq}{{\mathfrak{q}}}

\nc{\fC}{{\mathfrak{C}}}
\nc{\fJ}{{\mathfrak{J}}}
\nc{\fG}{{\mathfrak{G}}}
\nc{\fL}{{\mathfrak{L}}}
\nc{\fZ}{{\mathfrak{Z}}}
\nc{\fU}{{\mathfrak{U}}}
\nc{\fV}{{\mathfrak{V}}}
\nc{\fS}{{\mathfrak{S}}}

\nc{\od}{{\overline{d}}}
\nc{\rg}{{\textrm{R}\Gamma}}
\nc{\erg}{{\emph{R}\Gamma}}
\nc{\id}{{\textrm{Id}}}
\nc{\rhom}{{\textrm{RHom}}}

\def\tCF{\widetilde{\CF}}

\def\taut{\textrm{univ}}
\def\etaut{\emph{univ}}

\def\fgl{\mathfrak{gl}}

\def\fb{\mathfrak{b}}
\def\fh{\mathfrak{h}}
\def\fg{\mathfrak{g}}
\def\fp{\mathfrak{p}}
\def\ft{\mathfrak{t}}

\def\Id{\text{Id}}

\def\Hilb{{\text{Hilb}}}
\def\eHilb{{\emph{Hilb}}}

\def\ch{{\text{ch}}}
\def\ech{{\emph{ch}}}

\begin{document}

\title[The Chow of $S^{[n]}$ and the universal subscheme]{\large{THE CHOW OF $S^{[n]}$ AND THE UNIVERSAL SUBSCHEME}}
\author[Andrei Negu\cb t]{Andrei Negu\cb t}

\address{MIT, Department of Mathematics, 77 Massachusetts Ave, Cambridge, MA 02139, USA}
\address{Simion Stoilow Institute of Mathematics, Bucharest, Romania}
\email{andrei.negut@gmail.com}

\renewcommand{\thefootnote}{\fnsymbol{footnote}} 
\footnotetext{\emph{2010 Mathematics Subject Classification:} 14C05}     
\renewcommand{\thefootnote}{\arabic{footnote}} 

\renewcommand{\thefootnote}{\fnsymbol{footnote}} 
\footnotetext{\emph{Key words: }Hilbert schemes of points on surfaces, algebraic cycles}     
\renewcommand{\thefootnote}{\arabic{footnote}} 

\maketitle

\begin{abstract}  We prove that any element in the Chow ring of the Hilbert scheme $\Hilb_n$ of $n$ points on a smooth surface $S$ is a universal class, i.e. the push-forward of a polynomial in the Chern classes of the universal subschemes on $\Hilb_n \times S^k$ for some $k \in \BN$, with coefficients pulled back from the Chow of $S^k$.

\end{abstract}

\section{Introduction}

\noindent  We study the Hilbert scheme of $n$ points $\Hilb_n = S^{[n]}$ on a smooth algebraic surface over $\BC$. As $\Hilb_n$ is smooth, we may consider the Chow groups $A^*(\Hilb_n)$, always with coefficients in $\BQ$ throughout the present paper. One of the big sources of elements of $A^*(\Hilb_n)$ are universal classes, see Definition \ref{def:universal}. During a conversation on Hilbert schemes, Alina Marian suggested that all elements of $A^*(\Hilb_n)$ should be universal, and the purpose of the present note is to prove it. \\

\begin{theorem}
\label{thm:main} 
	
Any element of $A^*(\eHilb_n)$ is a universal class. \\
	
\end{theorem}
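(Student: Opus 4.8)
The plan is to argue by induction on $n$, using the $(n-1,n)$ nested Hilbert scheme as a correspondence and the short exact sequence relating the two universal subschemes as the glue that keeps us inside the class of universal cycles. First I would record the formal properties of universal classes that make induction possible. Writing $p_k\colon \Hilb_n \times S^k \to \Hilb_n$ for the projection and $\CZ^{(1)},\dots,\CZ^{(k)}$ for the pullbacks of the universal subscheme along the $k$ maps $\Hilb_n \times S^k \to \Hilb_n \times S$, a universal class is a pushforward $p_{k*}\big(P \cdot \beta\big)$ with $P$ a polynomial in the Chern classes of the $\CZ^{(j)}$ and $\beta$ pulled back from $A^*(S^k)$. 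A projection--formula computation on $\Hilb_n \times S^{k+l}$ shows the product of two universal classes is again universal: pulling the two factors back along the two partial projections matches the universal subschemes and multiplies the coefficients into a class on $S^{k+l}$. Hence the universal classes form a $\BQ$--subalgebra $\CU_n \subseteq A^*(\Hilb_n)$ containing $1$, and it suffices to realise every class as a pushforward of universal data.

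The engine is the double fibration
$$\Hilb_{n-1} \times S \xleftarrow{\ \pi\ } \Hilb_{n-1,n} \xrightarrow{\ g\ } \Hilb_n,$$
where $\Hilb_{n-1,n}$ is the smooth, irreducible, $2n$--dimensional nested Hilbert scheme of flags $Z_{n-1}\subset Z_n$, the map $\tau\colon \Hilb_{n-1,n}\to S$ records the support of $Z_n/Z_{n-1}$, $\pi=(f,\tau)$ with $f$ remembering $Z_{n-1}$, and $g$ remembers $Z_n$. Since $g$ is surjective and generically finite of degree $n$ between smooth projective varieties, $g_*g^* = n\cdot \Id$, so $g_*$ is surjective and every $\alpha\in A^*(\Hilb_n)$ is $g_*(\delta)$ for some $\delta$. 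The map $\pi$ is birational (over a generic point of $\Hilb_{n-1}\times S$ the added point is disjoint from $Z_{n-1}$, so $Z_n$ is unique), in fact the blow-up of $\Hilb_{n-1}\times S$ along the universal subscheme $\CZ_{n-1}$; the blow-up formula in codimension two gives
$$A^*(\Hilb_{n-1,n}) = \pi^* A^*(\Hilb_{n-1}\times S)\ \oplus\ \iota_* p^* A^*(\CZ_{n-1}),$$
with $\iota\colon E\hookrightarrow \Hilb_{n-1,n}$ the exceptional divisor and $p\colon E\to \CZ_{n-1}$ its $\BP^1$--bundle structure. Thus $\delta = \pi^*\delta_0 + \iota_* p^*\delta_1$, and it remains to prove that $g_*\pi^*\delta_0$ and $g_*\iota_* p^*\delta_1$ are universal.

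Both terms are controlled by the exact sequence of sheaves on $\Hilb_{n-1,n}\times S$
$$0 \to \CL \to \CO_{\CZ_n} \to \CO_{\CZ_{n-1}} \to 0,$$
in which $\CL$ is a line bundle supported on the graph of $\tau$ and I abusively write $\CZ_n,\CZ_{n-1}$ for the pulled-back universal subschemes. Taking Chern characters turns this into an identity expressing $g^*\ch(\CO_{\CZ_n})$ through $\pi^*\ch(\CO_{\CZ_{n-1}})$ together with the Chern classes of $\CL$ and of the graph of $\tau$. This is exactly what transports universal data between the two Hilbert schemes: given $\delta_0$ built from universal classes on $\Hilb_{n-1}$ (universal by the inductive hypothesis) externally multiplied against $A^*(S)$, the sequence lets me rewrite $g_*\pi^*\delta_0$ as a pushforward of a polynomial in the Chern classes of $\CZ_n$ against a coefficient on a power of $S$, i.e. as a universal class on $\Hilb_n$. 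The exceptional term is handled the same way after identifying $E$, via the sequence, with a locus cut out by the universal subscheme, and describing $A^*(\CZ_{n-1})$ through its own resolution $\Hilb_{n-2,n-1}$, so that the possibly singular Chow ring of $\CZ_{n-1}$ is never used directly.

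The main obstacle is precisely this last translation: converting the geometric operations $g_*\pi^*$ and $g_*\iota_* p^*$ into honest polynomials in the Chern classes of $\CZ_n$ with coefficients pulled back from $S^k$. Concretely one must (i) establish the blow-up description of $\Hilb_{n-1,n}$ and pin down the classes of $E$ and of the graph of $\tau$ in terms of $\CZ_n$, and (ii) check that the Chern-character bookkeeping through the exact sequence never leaves the subalgebra $\CU_n$, with a uniform treatment of the universal subschemes $\CZ_m$ appearing in the nested inductions. Everything else --- surjectivity of $g_*$, the blow-up decomposition, and the subalgebra property --- reduces the theorem to this single compatibility, which the exact sequence comparing $\CZ_n$ with $\CZ_{n-1}$ is designed to supply.
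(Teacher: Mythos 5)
Your opening reductions are sound: the subring property of universal classes is the paper's Proposition \ref{prop:ring}, the map $g=p_+$ is proper, surjective and generically finite of degree $n$, so $g_*g^*=n\cdot\Id$ and $g_*$ is surjective with $\BQ$--coefficients, and the identification of $(p_-,p_S)\colon \Hilb_{n-1,n}\to\Hilb_{n-1}\times S$ with the blow-up along $\CZ_{n-1}$ is a known result. The fatal step is the invocation of ``the blow-up formula in codimension two.'' That formula requires the center to be \emph{regularly embedded}, and part of its content is that the exceptional divisor is a $\BP^1$--bundle over the center. The universal subscheme $\CZ_{n-1}\subset \Hilb_{n-1}\times S$ is regularly embedded only for $n-1\leq 2$; for $n-1\geq 3$ it is singular and not a local complete intersection. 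Concretely, take $I=\fm_x^2\in\Hilb_3$ (the fat point): the fiber of $E\to\CZ_3$ over $(I,x)$ consists of all ideals $I'\subset I$ with $I/I'\cong\BC_x$, i.e. $\BP(\Hom(I,\BC_x))$, and since $\Hom(I,\BC_x)$ is dual to $I\otimes\BC_x=\fm_x^2/\fm_x^3$, which is $3$--dimensional, this fiber is $\BP^2$. So $E$ is not a $\BP^1$--bundle, the direct sum decomposition of $A^*(\Hilb_{n-1,n})$ you wrote has no justification, and your induction already breaks when passing from $\Hilb_3\times S$ to $\Hilb_4$.

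What survives is only the weaker statement (from the localization sequence, using smoothness of both total spaces) that $A^*(\Hilb_{n-1,n})=\pi^*A^*(\Hilb_{n-1}\times S)+\iota_*A^*(E)$, and this exposes the structural problem: the term $\iota_*A^*(E)$ involves the Chow groups of the singular varieties $E$ and $\CZ_{n-1}$, about which your inductive hypothesis (universality of classes on $\Hilb_{n-1}$) says nothing, so the induction does not close. Your parenthetical plan to handle $A^*(\CZ_{n-1})$ ``through its own resolution $\Hilb_{n-2,n-1}$'' is precisely where the entire difficulty of the theorem is concentrated and is not carried out; making it precise would essentially amount to reproving a de Cataldo--Migliorini type decomposition. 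The paper avoids this altogether: it quotes \eqref{eqn:decomp} as the generation statement, which reduces Theorem \ref{thm:main} to showing that the Nakajima correspondences preserve universal classes (Proposition \ref{prop:preserve}), and that is proved via the smooth projectivization models of Propositions \ref{prop:1 minus} and \ref{prop:2 minus} together with the push-forward formulas \eqref{eqn:push p}--\eqref{eqn:push pi}. Note that even the principal term $g_*\pi^*\delta_0$ of your decomposition secretly needs \eqref{eqn:push p} --- the evaluation of $(p_+\times p_S)_*(c_1(\CL)^a)$ as a Chern class of $\CI_n$ --- which you flag as obstacle (i) but do not supply; and the support conditions of Definition \ref{def:universal}, which the paper verifies carefully (they are what make push-forwards well-defined for quasi-projective $S$), are not addressed at all in your outline.
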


\noindent When $S$ is projective, this result follows from an explicit formula for the diagonal of $\Hilb_n$ as a Chern class of the so-called $\text{Ext}$ virtual bundle, which in turn can be written in terms of universal classes (see \cite{ES} for $S = \BP^2$, \cite{B} for $S$ with effective anti-canonical line bundle, \cite{M} for $S$ with trivial canonical line bundle, and \cite{GT} for the general case). Our proof is quite different from those above, and holds for quasi-projective $S$. We start from \cite{dCM}, which states that:
\begin{equation}
\label{eqn:dcm}
A^*(\Hilb_n) \cong \mathop{\bigoplus^{k_1 + ... + k_t =  n}_{\Gamma \in A^*(S^t)^{\text{sym}}}}^{k_1 \geq ... \geq k_t} \mathfrak{C}_{k_1,...,k_t}(\Gamma)
\end{equation}
where $\mathfrak{C}_{k_1,...,k_t}(\Gamma)$ are certain correspondences (see \eqref{eqn:decomp} for an explicit description, as well as an explanation of the superscript ``sym") expressed in terms of the Heisenberg operators $\fq_k$ of \cite{G} and \cite{Nak}. The explicit description of these operators in terms of l.c.i. morphisms from \cite{Hecke} allows us to show that they preserve the subrings of $A^*(\Hilb_n)$ consisting of universal classes, thus implying Theorem \ref{thm:main}. Moreover, this gives an algorithm for computing the universal classes corresponding to the various summands in \eqref{eqn:dcm}, as we will explain on the last two pages. \\

\noindent I would like to thank Alina Marian, Eyal Markman, Davesh Maulik, Georg Oberdieck, Junliang Shen, Richard Thomas and Qizheng Yin for many interesting discussions on Hilbert schemes. I gratefully acknowledge the NSF grants DMS--1760264 and DMS--1845034, as well as support from the Alfred P. Sloan Foundation. \\

\section{Hilbert schemes}

\subsection{} Let $S$ be a smooth algebraic surface over $\BC$. Let $\Hilb_n = S^{[n]}$ denote the Hilbert scheme which parametrizes length $n$ subschemes of $S$, i.e. exact sequences:
$$
0 \rightarrow I \rightarrow \CO_S \rightarrow Z \rightarrow 0
$$
($I$ will be an ideal sheaf) where $\text{length}(Z) = n$. There exists a universal subscheme:
$$
\CZ_n \subset \Hilb_n \times S
$$
whose restriction to any $\{ Z \} \times S$ is precisely $\text{Spec } Z$ as a subscheme of $S$. Then:
\begin{equation}
\label{eqn:universal ses}
0 \rightarrow \CI_n \rightarrow \CO_{\Hilb_n \times S} \rightarrow \CO_{\CZ_n} \rightarrow 0
\end{equation}
is a short exact sequence of coherent sheaves on $\Hilb_n \times S$, flat over $\Hilb_n$. Let:
\begin{equation}
\label{eqn:big hilb}
\Hilb = \bigsqcup_{n=0}^\infty \Hilb_n
\end{equation}
The Hilbert scheme $\Hilb_n$ is well-known to be a smooth $2n$ dimensional variety, so we may consider its Chow rings $A^*(\Hilb_n)$, always with rational coefficients. Set:
$$
A^*(\Hilb) = \bigoplus_{n=0}^\infty A^*(\Hilb_n) 
$$
For any $k \in \BN$, we let $\pi : \Hilb_n \times S^k \rightarrow \Hilb_n$ denote the standard projection, and let $\CZ_n^{(i)} \subset \Hilb_n \times S^k$ denote the pull-back of $\CZ_n \subset \Hilb_n \times S$ via the $i$--th projection. \\

\begin{definition}
\label{def:universal}
	
A \textbf{universal class} is any element of $A^*(\eHilb_n)$ of the form:
\begin{equation}
\label{eqn:universal}
\pi_{*} \Big[ P(...,\ech_{j}(\CO_{\CZ^{(i)}_n}),...)^{1 \leq i \leq k}_{j \in \BN} \Big]
\end{equation}
$\forall k \in \BN$ and $\forall$ polynomials $P$ with coefficients pulled back from $A^*(S^k)$, such that: 
\begin{equation}
\label{eqn:support}
\emph{supp }P \subset \CZ_n^{(1)} \cap ... \cap \CZ_n^{(k)}
\end{equation}
(which implies that the push-forward in \eqref{eqn:universal} is well-defined).\footnote{The notion above is more general than either the small or big tautological classes considered in \cite{MN}; the reason is the lack in general of a Kunneth decomposition: $A^*(S^k) \not \cong A^*(S)^{\otimes k}$} \\
	
\end{definition}

\begin{proposition} 
\label{prop:ring}

The set: 
$$
A^*_{\etaut}(\eHilb_n) \subseteq A^*(\eHilb_n)
$$ 
of all universal classes is a subring. \\

\end{proposition}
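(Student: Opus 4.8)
The plan is to check the three requirements for $A^*_{\taut}(\Hilb_n)$ to be a subring: that it contains $1$, and that it is closed under addition and under multiplication (closure under $\BQ$-scaling is immediate, since scaling $P$ by a constant preserves the form \eqref{eqn:universal}). The unit is the easy case: taking $k = 0$, so that $\pi = \text{Id}_{\Hilb_n}$ and $P = 1 \in A^0(\text{pt}) = \BQ$, realizes $1$ as a universal class, the support condition \eqref{eqn:support} being vacuous.

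Before handling sums, I would prove a stabilization step, since the number $k$ of auxiliary copies of $S$ is part of the data: a universal class presented with $k$ copies of $S$ can be re-presented with $k+1$. Given $\alpha = \pi_*[P]$ with $\text{supp } P \subseteq \CZ_n^{(1)} \cap \ldots \cap \CZ_n^{(k)}$, I would let $q : \Hilb_n \times S^{k+1} \to \Hilb_n \times S^k$ forget the last factor and set $P' = q^*(P) \cdot \frac{1}{n} \ch_2(\CO_{\CZ_n^{(k+1)}})$. Since $\CZ_n$ is Cohen--Macaulay of codimension $2$ we have $\ch_2(\CO_{\CZ_n^{(k+1)}}) = [\CZ_n^{(k+1)}]$, and as $\CZ_n^{(k+1)} \to \Hilb_n \times S^k$ is finite of degree $n$, the projection formula gives $q_*[P'] = P$ and hence $\pi_*[P'] = \alpha$; the support of $P'$ lies in $\CZ_n^{(1)} \cap \ldots \cap \CZ_n^{(k+1)}$. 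Iterating, any two universal classes can be brought to a common number $k$ of copies of $S$, after which their sum is $\pi_*[P_1 + P_2]$, again of the form \eqref{eqn:universal} with the required support. This disposes of addition.

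For products, I would write $\alpha = (\pi_{k_1})_*[P_1]$, $\beta = (\pi_{k_2})_*[P_2]$ and use that $\alpha \cdot \beta = \Delta^*(\alpha \times \beta)$ for the diagonal $\Delta : \Hilb_n \to \Hilb_n \times \Hilb_n$, where $\alpha \times \beta = (\pi_{k_1} \times \pi_{k_2})_*([P_1] \times [P_2])$. Base-changing $\pi_{k_1} \times \pi_{k_2}$ along $\Delta$ gives a fiber square with top corner $\Hilb_n \times S^{k_1+k_2} = (\Hilb_n \times S^{k_1}) \times_{\Hilb_n} (\Hilb_n \times S^{k_2})$; it is Tor-independent since all spaces are smooth of the expected dimension, so the base-change formula yields
\[
\alpha \cdot \beta = \pi_*\big( p^* P_1 \cdot (p')^* P_2 \big), \qquad \pi : \Hilb_n \times S^{k_1+k_2} \to \Hilb_n,
\]
where $p, p'$ are the projections onto $\Hilb_n \times S^{k_1}$, $\Hilb_n \times S^{k_2}$ remembering the first $k_1$, respectively the last $k_2$, copies of $S$. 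Because $p, p'$ are flat, $p^* \ch_j(\CO_{\CZ_n^{(i)}}) = \ch_j(\CO_{\CZ_n^{(i)}})$ on the larger product for $i \leq k_1$, and $(p')^*$ supplies the Chern characters with $k_1 < i \leq k_1+k_2$; thus $p^* P_1 \cdot (p')^* P_2$ is a polynomial in the $\ch_j(\CO_{\CZ_n^{(i)}})$ with coefficients from $A^*(S^{k_1+k_2})$ and support inside $\CZ_n^{(1)} \cap \ldots \cap \CZ_n^{(k_1+k_2)}$, so $\alpha \cdot \beta$ is universal.

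The hardest part, and the one I would treat most carefully, is the base-change identity in the product step. Since $S$ is only quasi-projective, $\pi$ is not proper, so every push-forward here is legitimate only by virtue of the support condition \eqref{eqn:support}, which makes the relevant loci finite over $\Hilb_n$; one must confirm that the functoriality of proper-on-support push-forward and the commutation of Gysin maps with it go through in this setting, and that the cleanness of the fiber square (smoothness and expected dimension) removes any excess-intersection correction. Granting these standard facts, together with the compatibility of Chern characters with flat pullback, all that remains is the bookkeeping of supports and coefficients indicated above.
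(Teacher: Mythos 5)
Your proof is correct and follows the same skeleton as the paper's: first stabilize the number $k$ of auxiliary $S$-factors so that sums can be taken at a common $k$, then express a product of two universal classes as a single push-forward from $\Hilb_n \times S^{k_1+k_2}$. The two local devices differ, though, and the comparison is instructive. For stabilization, the paper multiplies $\sigma^*(P)$ by the class $\Delta_{k,k+1}$ of the diagonal in the last two $S$-factors (a legitimate coefficient, being pulled back from $A^*(S^{k+1})$) and pushes forward along $\sigma$, whereas you multiply by $\tfrac{1}{n}\ch_2(\CO_{\CZ_n^{(k+1)}}) = \tfrac{1}{n}[\CZ_n^{(k+1)}]$ and use that $\CZ_n^{(k+1)}$ is finite flat of degree $n$ over $\Hilb_n \times S^k$; both work, and both have a harmless degenerate case (yours fails literally at $n=0$, where $\Hilb_0$ is a point and the statement is trivial, while the paper's trick needs $k \geq 1$). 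For products, your final identity $\alpha \cdot \beta = \pi_*( p^* P_1 \cdot (p')^* P_2)$ is exactly the paper's \eqref{eqn:identity}, but you reach it with heavier machinery --- exterior products, the Gysin map of the diagonal $\Delta$, Tor-independence and the absence of excess intersection --- while the paper obtains it from only the projection formula and the flat base change $\pi'_* {\rho'}^* = \rho^* \pi_*$ in the Cartesian square over $\Hilb_n$: all maps in that square are flat projections, so regular embeddings and excess-intersection considerations never enter. (Incidentally, the Tor-independence of your square follows most directly from flatness of $\pi_{k_1} \times \pi_{k_2}$, rather than from smoothness plus a dimension count.) Your closing concern about push-forwards along non-proper maps is precisely the point the paper addresses at the end of its proof: by the support condition \eqref{eqn:support}, every class being pushed forward is supported on a locus finite over the target, so proper-on-support push-forward, the projection formula and base change all apply in the quasi-projective setting.
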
 
	
\begin{proof} It is clear that any $\BQ$--linear combination of universal classes is universal, even if they are defined with respect to different $k$'s. This is because any class of the form \eqref{eqn:universal} for a given $k$ is also of the form \eqref{eqn:universal} for $k+1$. Indeed, we have:
$$
\pi_{*} \Big[ P(...,\ch_{j}(\CO_{\CZ^{(i)}_n}),...)^{1 \leq i \leq k}_{j \in \BN} \Big] = ( \pi \circ \sigma)_* \Big[ \sigma^* \left( P(...,\ch_{j}(\CO_{\CZ^{(i)}_n}),...)^{1 \leq i \leq k}_{j \in \BN} \right) \cdot \Delta_{k,k+1} \Big]
$$
where $\Delta_{k,k+1}$ is the pull-back of the codimension 2 diagonal in $S^{k+1}$ involving the last two factors, and $\sigma : \Hilb_n \times S^{k+1} \rightarrow \Hilb_n \times S^k$ forgets the last copy of $S$. So it remains to prove that the product of universal classes is also a universal class. This is a consequence of the identity:
$$
\pi_{*} \Big[ P(...,\ch_{j}(\CO_{\CZ^{(i)}_n}),...)^{1 \leq i \leq k}_{j \in \BN} \Big] \cdot \rho_{*} \Big[ Q(...,\ch_{j}(\CO_{\CZ^{(i)}_n}),...)^{1  \leq i \leq l}_{j \in \BN} \Big] = 
$$
\begin{equation}
\label{eqn:identity}
= f_* \Big[ P(...,\ch_{j}(\CO_{\CZ^{(i)}_n}),...)^{1 \leq i \leq k}_{j \in \BN} \cdot Q(...,\ch_{j}(\CO_{\CZ^{(i)}_n}),...)^{k+1  \leq i \leq k+l}_{j \in \BN} \Big]
\end{equation}
with all maps as in the following Cartesian square:
$$
\xymatrix{
\Hilb_n \times S^{k+l} \ar[d]_{\rho'} \ar[r]^-{\pi'} \ar[rd]^f & \Hilb_n \times S^{l} \ar[d]^{\rho} \\
\Hilb_n \times S^{k}  \ar[r]^-{\pi}  & \Hilb_n }
$$
(all the maps are identities on $\Hilb_n$, and we think of $\pi'$ and $\rho'$ as forgetting the first, respectively last, factors of $S$). The identity \eqref{eqn:identity} is a straightforward consequence of $f = \rho \circ \pi' = \pi \circ \rho'$ and the base change formula $\pi'_* {\rho'}^* = \rho^* \pi_*$. Throughout the present proof, we were able to use the push-forward maps $\pi_*$, $\sigma_*$ and $\rho_*$ even if $\pi, \sigma$ and $\rho$ were non-proper (in the case of quasi-projective $S$), because we only applied them to classes whose support is proper under the respective maps. 
	
\end{proof} 

\subsection{}
\label{sub:nakajima}

We will prove Theorem \ref{thm:main} by deducing it from another well-known description of $A^*(\Hilb_n)$: the de Cataldo-Migliorini decomposition (\cite{dCM}). To review this construction, we must recall the Heisenberg algebra action introduced independently by Grojnowski (\cite{G}) and Nakajima (\cite{Nak}) on the Chow groups of Hilbert schemes. For any $n,k \in \BN$, consider the closed subscheme:
$$
\Hilb_{n,n+k} = \Big\{(I \supset I') \text{ s.t. } I/I' \text{ is supported at a single }x \in S \Big\} \subset \Hilb_n \times \Hilb_{n+k}
$$
endowed with projection maps:
\begin{equation}
\label{eqn:diagram zk}
\xymatrix{& \Hilb_{n,n+k} \ar[ld]_{p_-} \ar[d]^{p_S} \ar[rd]^{p_+} & \\ \Hilb_{n} & S & \Hilb_{n+k}}
\end{equation}
that remember $I$, $x$, $I'$, respectively. One may use $\Hilb_{n,n+k}$ as a correspondence:
\begin{equation}
\label{eqn:nakajima}
A^*(\Hilb) \xrightarrow{\fq_k} A^*(\Hilb \times S)
\end{equation}
(recall the notation \eqref{eqn:big hilb}) given by:
\begin{equation}
\label{eqn:nakajima formula}
\fq_k = (p_+ \times p_S)_* \circ p_-^*
\end{equation}
\footnote{The transposed correspondences give rise to operators $\fq_{-k}$, which we will not study} The main result of \cite{Nak} is that the operators $\fq_k$ obey the commutation relations in the Heisenberg algebra. More generally, we may consider:
\begin{equation}
\label{eqn:composition 1}
\fq_{k_1}...\fq_{k_t} : A^*(\Hilb) \rightarrow A^*(\Hilb \times S^t)
\end{equation}
where the convention is that the operator $\fq_{k_i}$ acts in the $i$-th factor of $S^t = S \times ... \times S$. Then associated to any $\Gamma \in A^*(S^t)$, one obtains an endomorphism of $A^*(\Hilb)$:
\begin{equation}
\label{eqn:composition 2}
\fq_{k_1}...\fq_{k_t}(\Gamma) = \pi_{1*} (\pi_2^*(\Gamma) \cdot \fq_{k_1}...\fq_{k_t})
\end{equation}
where $\pi_1, \pi_2 : \Hilb \times S^t \rightarrow \Hilb, S^t$ denote the standard projections (the non-properness of $\pi_1$ is not a problem for defining \eqref{eqn:composition 2}, because the support of $\fq_{k_1}...\fq_{k_t}$ is proper over $\Hilb$). One of the main results of \cite{dCM} is the following decomposition:
\begin{equation}
\label{eqn:decomp}
A^*(\Hilb) = \bigoplus^{k_1 \geq ... \geq k_t \in \BN}_{\Gamma \in A^*(S^t)^{\text{sym}}} \fq_{k_1}... \fq_{k_t}(\Gamma) \cdot A^*(\Hilb_0)
\end{equation}
where the superscript $``\text{sym}"$ refers the part of $A^*(S^t)$ which is symmetric with respect to those transpositions $(ij) \in \mathfrak{S}_t$ for which $k_i = k_j$. Since $\Hilb_0 = \text{pt}$, we have $A^*(\Hilb_0) \cong \BQ$, and so Theorem \ref{thm:main} follows from \eqref{eqn:decomp} and the following: \\

\begin{proposition}
\label{prop:preserve}

The endomorphisms \eqref{eqn:composition 2} preserve the subrings $A^*_{\etaut}(\eHilb_n)$. \\

\end{proposition}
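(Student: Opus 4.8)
The plan is to prove Proposition \ref{prop:preserve} by reducing to a single Nakajima operator and then analyzing the Hecke correspondence geometrically. Since the endomorphism \eqref{eqn:composition 2} is assembled by composing the operators $\fq_{k_i}$ (each acting in one factor of $S$) and then capping with $\pi_2^*(\Gamma)$ and pushing forward to $\Hilb$, I would first set up an auxiliary notion of \emph{relative} universal class on $A^*(\Hilb_n \times S^t)$: a push-forward along a map forgetting extra copies of $S$ of a polynomial in the $\ch_j(\CO_{\CZ_n^{(i)}})$ (over all relevant factors) with coefficients pulled back from the Chow of a power of $S$, subject to the analogue of the support condition \eqref{eqn:support}. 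With this definition, the final operation $\pi_{1*}(\pi_2^*(\Gamma) \cdot -)$ manifestly sends relative universal classes to universal classes, exactly as in the proof of Proposition \ref{prop:ring}: capping with the coefficient $\pi_2^*(\Gamma)$ stays inside the class, and the push-forward $\pi_{1*}$ along $\Hilb_n \times S^t \to \Hilb_n$ merely integrates out more factors. It therefore remains to show that each individual $\fq_k : A^*(\Hilb \times S^t) \to A^*(\Hilb \times S^{t+1})$ preserves relative universal classes; and since $\fq_k$ operates only in a single new factor of $S$ while the coefficients on the remaining factors ride along inertly, this reduces to the basic assertion that
$$\fq_k : A^*(\Hilb) \longrightarrow A^*(\Hilb \times S)$$
sends universal classes to relative universal classes.

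Next I would unravel $\fq_k = (p_+ \times p_S)_* \circ p_-^*$ using the explicit geometry of the correspondence $\Hilb_{n,n+k}$ from \cite{Hecke}. The input I want is that $p_-$ and $p_+$ are l.c.i. morphisms and that, on $\Hilb_{n,n+k} \times S$, the two pulled-back universal subschemes $p_-^*\CZ_n$ and $p_+^*\CZ_{n+k}$ are nested, $\CZ_n \subset \CZ_{n+k}$, with quotient supported on the graph $\Gamma$ of the support map $p_S$. In the simplest case $k=1$ this is the exact sequence $0 \to \iota_{\Gamma*}\CL \to \CO_{\CZ_{n+1}} \to \CO_{\CZ_n} \to 0$ for a tautological line bundle $\CL$ on the correspondence, and for general $k$ the corresponding comparison (with a length-$k$ sheaf on $\Gamma$ in place of $\CL$) is the content of the l.c.i. description in \loccit Because $p_-$ is l.c.i., the pullback $p_-^*$ commutes with the push-forward defining a universal class by base change, and $p_-^*\CZ_n$ is again a universal subscheme; hence $p_-^*$ of a universal class is a universal class built from the $\ch_j(\CO_{p_-^*\CZ_n})$. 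Using the nesting exact sequence together with Grothendieck--Riemann--Roch, I can rewrite each such class in terms of the $\ch_j(\CO_{p_+^*\CZ_{n+k}}) = (p_+ \times \id)^* \ch_j(\CO_{\CZ_{n+k}})$, up to classes supported on $\Gamma$, the latter being push-forwards along $\iota_\Gamma$ of polynomials in $\ch(\CL)$ and $\td(p_S^* T_S)$.

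Finally I would carry out the push-forward $(p_+ \times p_S)_*$ and verify that the output is a relative universal class on $\Hilb_{n+k} \times S$, and this is the step where the real work lies and the main obstacle sits: unlike the benign ``forget a factor'' push-forwards used in Proposition \ref{prop:ring}, $p_+ \times p_S$ is a genuine proper push-forward out of the Hecke correspondence, so its effect on tautological classes must be expressed intrinsically in terms of universal data on $\Hilb_{n+k} \times S$. The mechanism is that, after rewriting $p_-^*(\alpha)$ as a polynomial in the $(p_+ \times \id)^*\ch_j(\CO_{\CZ_{n+k}})$ and in $\Gamma$-supported classes, the projection formula isolates a pure push-forward $(p_+ \times p_S)_*(1)$ (a basic Hecke computation) together with push-forwards of $\iota_\Gamma$-supported classes; since $\Gamma$ maps to its image recording the new point $x$ in the extra $S$ factor, and $x$ always lies in the support of $\CZ_{n+k}$, these land along $\CZ_{n+k}$ in that factor and so obey the support condition \eqref{eqn:support}, while $\CL$ and the normal data themselves descend from Chern classes of $\CO_{\CZ_{n+k}}$. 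Matching each push-forward of a tautological monomial with an explicit class of the form \eqref{eqn:universal} requires the full strength of the l.c.i. presentation of $\Hilb_{n,n+k}$ in \cite{Hecke}; once this identification is in place the remaining base-change and projection-formula manipulations are routine, and combined with the reduction of the first paragraph they yield Proposition \ref{prop:preserve}, hence (applying the endomorphisms to the universal unit class of $A^*(\Hilb_0) \cong \BQ$ and invoking \eqref{eqn:decomp}) Theorem \ref{thm:main}.
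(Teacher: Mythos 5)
Your opening reduction is correct and coincides with the paper's own organization: the ``relative universal classes'' you introduce are precisely the subrings $A^*_\taut(\Hilb_n \times S^t)$ of \eqref{eqn:taut subring 1}, and the observation that capping with $\pi_2^*(\Gamma)$ and pushing forward along $\pi_1$ lands back among universal classes is exactly the paragraph following Proposition \ref{prop:final}. The gap is in the second half, where the real content lies. You propose to analyze $\fq_k = (p_+ \times p_S)_* \circ p_-^*$ directly on the correspondence $\Hilb_{n,n+k}$, taking as input that $p_\pm$ are l.c.i.\ morphisms, that there is a nesting exact sequence with a length-$k$ quotient on the graph of $p_S$, and that \cite{Hecke} supplies an l.c.i.\ presentation of $\Hilb_{n,n+k}$ strong enough to evaluate $(p_+ \times p_S)_*$ on arbitrary tautological monomials. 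No such presentation exists: for $k \geq 2$ the scheme $\Hilb_{n,n+k}$ is singular --- this is exactly the obstruction the paper flags at the start of Section \ref{sub:nested} --- and the results of \cite{Hecke} and \cite{W surf} used here give projectivization (hence l.c.i.) descriptions only for the one-step correspondence $\Hilb_{n-1,n}$ (Proposition \ref{prop:1 minus}) and for the smooth triple $\Hilb_{n-1,n,n+1}$ (Proposition \ref{prop:2 minus}). Your final paragraph explicitly defers ``the real work'' to this nonexistent input, so the argument does not close.

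What the paper does instead, and what your sketch is missing, is the factorization \eqref{eqn:resolution}: $\fq_k$ is rewritten as $(p_+ \times p_S)_* \circ (\pi_{+*} \circ \pi_-^*)^{k-1} \circ p_-^*$, a composition of elementary pull-backs and push-forwards through the smooth nested Hilbert schemes, so that the singular $\Hilb_{n,n+k}$ never appears. Each elementary step is then handled in Proposition \ref{prop:final}: the pull-backs preserve universality by fiat of the definitions \eqref{eqn:taut subring 2}--\eqref{eqn:taut subring 3}, and the push-forwards are computed via the exact sequence \eqref{eqn:short} together with the explicit formulas \eqref{eqn:push p}--\eqref{eqn:push pi}. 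Note also a subtlety that your support argument glosses over: after applying the projection formula, the classes $R_a$ pulled back from $\Hilb_n \times S \times S^k$ are supported only where $x_1, \ldots, x_k \in \text{supp } I_n$, with no control over the new point $x$; the support condition \eqref{eqn:support} is rescued not by the geometry of the graph of $p_S$ but by the vanishing of $(p_+ \times p_S)_*(c_1(\CL)^a) = (-1)^a c_{a+2}(\CI_n \otimes \omega_S^{-1})$ off the locus $x \in \text{supp } I_n$, because $\CI_n$ is a line bundle there. Any correct completion of your approach would need an analogous mechanism.
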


\subsection{}
\label{sub:nested}

The remainder of our paper will be devoted to proving Proposition \ref{prop:preserve}. The problem with doing so directly from the definition \eqref{eqn:nakajima formula} is that the correspondences \eqref{eqn:diagram zk} are rather singular. The exception to this is the case $k=1$, namely:
\begin{equation}
\label{eqn:diagram z1}
\xymatrix{& \Hilb_{n-1,n} \ar[ld]_{p_-} \ar[d]^{p_S} \ar[rd]^{p_+} & \\ \Hilb_{n-1} & S & \Hilb_{n}} \qquad \xymatrix{& (I \supset_x I')  \ar[ld]_{p_-} \ar[d]^{p_S} \ar[rd]^{p_+} & \\ I & x & I'}
\end{equation}
Above and hereafter, we will write $I \supset_x I'$ if $I \supset I'$ and $I/I' \cong \BC_x$. It is well-known that $\Hilb_{n-1,n}$ is smooth of dimension $2n$. Consider the line bundle:
\begin{equation}
\label{eqn:taut 1}
\xymatrix{\CL \ar@{..>}[d] \\ \Hilb_{n-1,n}} \qquad \CL|_{(I \supset_x I')} = \Gamma(S,I/I')
\end{equation}
If $\CE = [\CW \rightarrow \CV]$ is a complex of locally free sheaves on a scheme $X$, then we define:
\begin{equation}
\label{eqn:subscheme}
\BP_X(\CE) \hookrightarrow \BP_X(\CV) := \text{Proj}_X(\text{Sym}(\CV))
\end{equation}
to be the closed subscheme determined by the image of the map: 
\begin{equation}
\label{eqn:equation}
\rho^*(\CW) \rightarrow \rho^*(\CV) \rightarrow \CO(1)
\end{equation}
where $\rho : \BP_X(\CV) \rightarrow X$ is the standard projection. In all cases considered in the present paper, the closed subscheme \eqref{eqn:subscheme} is a local complete intersection, cut out by the cosection \eqref{eqn:equation}. The following result is closely related to Lemma 1.1 of \cite{EGL}: \\

\begin{proposition}
\label{prop:1 minus}

Let $\CI_n$ be the universal ideal sheaf on $\eHilb_n \times S$, i.e. the kernel of the map $\CO_{\eHilb_n \times S} \twoheadrightarrow \CO_{\CZ_n}$ from \eqref{eqn:universal ses}. Then we have an isomorphism:
\begin{equation}
\label{eqn:p plus} 
\xymatrix{\eHilb_{n-1,n} \ar[r]^-{\cong} \ar[rd]_{p_+ \times p_S} & \BP_{\eHilb_n \times S}(- \CI_n^\vee \otimes \omega_S) \ar[d]^\rho \\ & \eHilb_n \times S}
\end{equation}
The line bundle $\CL$ on $\eHilb_{n-1,n}$ is isomorphic to $\CO(-1)$ on $\BP_{\eHilb_n \times S}(- \CI_n^\vee \otimes \omega_S)$. \\

\end{proposition}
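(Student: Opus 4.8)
The plan is to prove the isomorphism of \eqref{eqn:p plus} by a functor-of-points argument, reducing it to an explicit description of the fibers of $p_+ \times p_S$ together with relative duality on the surface. First I would record the geometry of the map. A point of $\Hilb_{n-1,n}$ lying over $(I',x) \in \Hilb_n \times S$ is an ideal $I \supset I'$ with $I/I' \cong \BC_x$, and to give such an $I$ is the same as to give a one-dimensional subspace of $\CO_S/I'$ annihilated by $\fm_x$, i.e. a line in the socle $\Hom_{\CO_S}(\BC_x, \CO_S/I')$. In particular the fiber is empty unless $x$ lies in the support of $\CO_S/I'$, so set-theoretically $p_+ \times p_S$ factors through $\CZ_n$, and over the open locus where $\CO_S/I'$ has one-dimensional socle at $x$ the fiber is a single reduced point. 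This already matches the expected shape of the right-hand side, and the task is to upgrade this pointwise picture to an isomorphism of schemes over $\Hilb_n \times S$, with the correct scheme structure.

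Second, I would make the complex $-\CI_n^\vee \otimes \omega_S$ explicit. Since $\CZ_n$ is finite flat of degree $n$ over the smooth base $\Hilb_n$, it is Cohen--Macaulay of codimension two in $\Hilb_n \times S$, so $\CI_n$ is perfect of tor-amplitude $[0,1]$ and $\sRHom(\CI_n,\CO)$ has $H^0 = \CO$ and $H^1 = \CN$, a sheaf supported on $\CZ_n$. Choosing a two-term locally free representative and twisting, one obtains a locally free presentation $[\CW \to \CV]$ whose $K$-class is $-\CI_n^\vee \otimes \omega_S$ and whose cokernel is $\CN \otimes \omega_S$; thus $\BP_{\Hilb_n \times S}(-\CI_n^\vee \otimes \omega_S) = \text{Proj}_{\Hilb_n \times S}(\text{Sym}(\CN \otimes \omega_S))$ in the sense of \eqref{eqn:subscheme}, which is empty away from $\CZ_n$ and whose fiber over $(I',x)$ is $\BP$ of the fiber of $\CN \otimes \omega_S$. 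The key input is then local duality on $S$ (whose dualizing sheaf is $\omega_S$, equivalently $\det$ of the normal bundle of the graph of the section $x$): it identifies $\Hom_{\CO_S}(\BC_x, \CO_S/I')$ with the dual of the fiber of $\CN \otimes \omega_S$ at $(I',x)$, so the two fibers computed above coincide, including their projective-space structure.

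Third, I would match universal properties to obtain the scheme isomorphism. On $\Hilb_{n-1,n}$ the quotient $\CI_n/\CI'$ of the two universal ideals is a line bundle supported on the graph of $p_S$; dualizing against $\CN \otimes \omega_S$ via the family version of the local-duality identification above produces a line-bundle quotient of the pullback of $\CN \otimes \omega_S$, hence a morphism $\Hilb_{n-1,n} \to \BP_{\Hilb_n \times S}(-\CI_n^\vee \otimes \omega_S)$ over $\Hilb_n \times S$ by the universal property of $\text{Proj}(\text{Sym})$ together with the vanishing of the cosection \eqref{eqn:equation}. Running the construction backwards (a line-bundle quotient recovers the sub-line of the socle, hence $I/I'$, hence $I$) gives the inverse on $T$-points. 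Since the universal sub-line $\CL = I/I' $ is precisely the line in the socle dual to the tautological quotient $\CO(1)$, this also yields the final assertion $\CL \cong \CO(-1)$.

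The hard part will be the third step: turning the pointwise socle computation into an isomorphism of schemes rather than of closed points. Two things must be controlled at once. One is the duality bookkeeping, namely checking that $\sRHom(\CI_n,\CO)\otimes\omega_S$ genuinely computes the family of socles with the correct $\omega_S$-twist, and that this persists after an arbitrary base change $T \to \Hilb_n \times S$, so that one has an isomorphism of functors. The other is the scheme structure: one must verify that the closed subscheme of $\BP_{\Hilb_n \times S}(\CV)$ cut out by the cosection \eqref{eqn:equation} agrees, as a subscheme, with the natural scheme structure on $\Hilb_{n-1,n}$ coming from \eqref{eqn:universal ses}, i.e. that both are the local complete intersection cut out by the same equation. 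A clean way to conclude is to construct the natural morphism in the direction above, verify it is bijective on points and an isomorphism of fibers via the socle computation, and then promote this to an isomorphism using the smoothness of $\Hilb_{n-1,n}$ of dimension $2n$ and the fact that both sides are local complete intersections over $\Hilb_n \times S$ supported on $\CZ_n$.
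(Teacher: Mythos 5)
Your overall strategy is the right one, and it is essentially the argument behind the result: note that the paper itself offers no proof of this proposition — it refers to Section 4 of \cite{W surf} (remarking on the kinship with Lemma 1.1 of \cite{EGL}) — and the proof given there runs along the lines of your first two steps. Those steps are sound: the fiber of $p_+\times p_S$ over $(I',x)$ is the space of lines in the socle $\Hom_{\CO_S}(\BC_x,\CO_S/I')$; the subscheme \eqref{eqn:subscheme} attached to $-\CI_n^\vee\otimes\omega_S$ depends only on the cokernel of $\CV^\vee\otimes\omega_S\to\CW^\vee\otimes\omega_S$, i.e. it equals $\text{Proj}(\text{Sym}(\CN\otimes\omega_S))$ with $\CN=\mathscr{E}xt^1(\CI_n,\CO)$; and local duality matches the two sets of fibers. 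Your mechanism for the last claim is also correct: dualizing the inclusion $\CL\otimes\CO_\Gamma\hookrightarrow\CO_{\CZ_n}$ (the sequence analogous to \eqref{eqn:short}) produces an invertible quotient $(p_+\times p_S)^*(\CN\otimes\omega_S)\twoheadrightarrow\CL^{-1}$, which is the morphism in \eqref{eqn:p plus} and yields $\CL\cong\CO(-1)$.

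The genuine gap is your proposed way of concluding, and neither of the two principles invoked in your final paragraph is valid. First, a proper morphism from a smooth variety that is bijective on closed points onto an lci scheme of the same dimension need not be an isomorphism: the normalization $\BA^1\to\{y^2=x^3\}$ is such a map between lci curves. Second, inducing isomorphisms on all scheme-theoretic fibers over $\Hilb_n\times S$ is also not enough, because neither side is flat over $\Hilb_n\times S$ (both are supported over $\CZ_n$): the closed immersion $\text{Spec}\,\BC[t]/(t)\hookrightarrow\text{Spec}\,\BC[t]/(t^2)$ over $\BA^1$ induces an isomorphism on every fiber without being an isomorphism. Moreover, the lci property of the cosection locus that you invoke is not available for free: it requires the dimension bound $\dim\leq 2n$, which one obtains precisely by comparing with $\Hilb_{n-1,n}$, so quoting it as an input risks circularity. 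The repair is to carry out in earnest what the first half of your third step only gestures at: an isomorphism of functors of points over $\Hilb_n\times S$. The technical point that makes the family socle duality work over an arbitrary (possibly non-reduced) base $T\to\Hilb_n\times S$ is that for a $T$-flat family of codimension-two Cohen--Macaulay quotients, the single non-vanishing sheaf $\mathscr{E}xt^2(-,\omega_S)$ is the cokernel of a map of vector bundles, hence commutes with arbitrary base change and is exact (contravariantly) on such families; this lets you run the socle correspondence in both directions on $T$-points, after which no promotion step is needed. This functor-of-points form of the argument is exactly what is done in \cite{W surf} and \cite{Hecke}.
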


\noindent We refer the reader to Section 4 of \cite{W surf} for details on why \eqref{eqn:p plus} is a special case of \eqref{eqn:subscheme}. In a few words, there is a short exact sequence with $\CW$, $\CV$ locally free:
\begin{equation}
\label{eqn:ses}
0 \rightarrow \CW \rightarrow \CV \rightarrow \CI_n \rightarrow 0
\end{equation}
Then the notation $- \CI^\vee_n$ in \eqref{eqn:p plus} stands for the complex $\left[ \CV^\vee \rightarrow \CW^\vee \right]$. Finally, $\omega_S$ denotes both the canonical line bundle on $S$ and its pull-back to $\Hilb_n \times S$. \\

\subsection{} 

Let us consider the following more complicated cousin of the scheme $\Hilb_{n,n+1}$:
\begin{equation}
\label{eqn:punctual}
\Hilb_{n-1,n,n+1} = \Big \{ (I, I',I'') \text{ such that } I \supset_x I' \supset_x I''
\text{for some } x \in S \Big \} 
\end{equation}
where $I \in \Hilb_{n-1}$, $I' \in \Hilb_{n}$ and $I'' \in \Hilb_{n+1}$. We have shown in \cite{W surf} that $\Hilb_{n-1,n,n+1}$ is smooth of dimension $2n+1$. Consider the line bundles:
\begin{equation}
\label{eqn:taut 2}
\xymatrix{\CL, \CL' \ar@{..>}[d] \\ \Hilb_{n-1,n,n+1}} \quad \CL|_{(I \supset_x I' \supset_x I'')} = \Gamma(S,I'/I''), \ \CL'|_{(I \supset_x I' \supset_x I'')} = \Gamma(S,I/I')
\end{equation}
Consider also the proper maps which forget either $I''$ or $I$:
\begin{equation}
\label{eqn:diagram z2}
\xymatrix{& \Hilb_{n-1,n,n+1} \ar[ld]_{\pi_-} \ar[d]^{\pi_+} \\ \Hilb_{n-1,n} & \Hilb_{n,n+1}}
 \qquad \xymatrix{& (I \supset_x I' \supset_x I'') \ar[ld]_{\pi_-} \ar[d]^{\pi_+} \\ (I \supset_x I') & (I' \supset_x I'')}
\end{equation}
Let $\Gamma : \Hilb_{n,n+1} \hookrightarrow \Hilb_{n,n+1} \times S$ be the graph of the map $p_S$, and let $\CL$ be the line bundle \eqref{eqn:taut 1} on $\Hilb_{n,n+1}$. We showed in \cite{Hecke} that there is a short exact sequence:
$$
0 \rightarrow \CL^{-1} \rightarrow \Gamma^*(\CV^\vee) \rightarrow \Gamma^*(\CW^\vee)
$$
with $\CW$, $\CV$ as in \eqref{eqn:ses}. We will use the notation $- \Gamma^*(\CI_n^\vee) + \CL^{-1}$ for the complex:
$$
\left[ \frac {\Gamma^*(\CV^\vee)}{\CL^{-1}} \longrightarrow \Gamma^*(\CW^\vee) \right]
$$
of coherent sheaves on $\Hilb_{n,n+1}$. \\

\begin{proposition}
\label{prop:2 minus}

(\cite{Hecke}) Let $\CI_n$ be the universal ideal sheaf on $\eHilb_n \times S$. Then:
\begin{equation}
\label{eqn:pi plus} 
\xymatrix{\eHilb_{n-1,n,n+1} \ar[r]^-{\cong} \ar[rd]_{\pi_+} & \BP_{\eHilb_{n,n+1}}(- \Gamma^*(\CI_n^\vee) \otimes \omega_S + \CL^{-1} \otimes \omega_S) \ar[d]^\rho \\ & \eHilb_{n,n+1}}
\end{equation} 
The line bundle $\CL'$ on $\eHilb_{n-1,n,n+1}$ is isomorphic to $\CO(-1)$ on the projectivization. \\

\end{proposition}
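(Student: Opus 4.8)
The plan is to exhibit the morphism $\pi_+$ of \eqref{eqn:diagram z2} as a base change of the morphism $p_+ \times p_S$ from Proposition \ref{prop:1 minus}, and then to reconcile the two descriptions of the complex being projectivized. Write $\gamma : \eHilb_{n,n+1} \to \eHilb_n \times S$ for the morphism $(I' \supset_x I'') \mapsto (I', x)$, so that $\gamma = (p_- \times \id_S) \circ \Gamma$ with $\Gamma$ the graph of $p_S$. The first step is to identify
\begin{equation*}
\eHilb_{n-1,n,n+1} \;\cong\; \eHilb_{n-1,n} \times_{\eHilb_n \times S} \eHilb_{n,n+1},
\end{equation*}
the fiber product being taken along $p_+ \times p_S$ on the first factor and along $\gamma$ on the second, in such a way that the two projections become $\pi_-$ and $\pi_+$. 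On closed points this is transparent, since a point of the fiber product is a pair $(I \supset_x I')$, $(I' \supset_x I'')$ with common image $(I', x)$, i.e.\ a flag $I \supset_x I' \supset_x I''$. I expect the scheme-theoretic form of this identity to be the main obstacle: transversality of $p_+ \times p_S$ and $\gamma$ is not automatic, so one must appeal to the explicit local description of $\eHilb_{n-1,n,n+1}$ from \cite{W surf}, which also guarantees that the fiber product is reduced and smooth of the expected dimension $2n+1$.

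Granting this, the second step is formal. The construction \eqref{eqn:subscheme} of $\BP_X(\CE)$ is compatible with base change, since the ambient bundle, the defining cosection \eqref{eqn:equation}, and the line bundle $\CO(1)$ all pull back. Applying $\gamma^*$ to Proposition \ref{prop:1 minus} therefore yields
\begin{equation*}
\eHilb_{n-1,n,n+1} \;\cong\; \BP_{\eHilb_{n,n+1}}\big(\gamma^*(-\CI_n^\vee \otimes \omega_S)\big),
\end{equation*}
with $\CO(-1)$ pulling back to $\CO(-1)$. Since the line bundle $\CL'$ on $\eHilb_{n-1,n,n+1}$, whose fiber is $\Gamma(S, I/I')$, equals $\pi_-^* \CL$, and $\CL \cong \CO(-1)$ on $\eHilb_{n-1,n}$ by Proposition \ref{prop:1 minus}, this also gives $\CL' \cong \CO(-1)$, as claimed.

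It remains to match $\gamma^*(-\CI_n^\vee \otimes \omega_S)$ with the complex $-\Gamma^*(\CI_n^\vee)\otimes \omega_S + \CL^{-1}\otimes \omega_S$ of the statement. Using $\gamma = (p_- \times \id_S) \circ \Gamma$ and that $\CV, \CW$ are locally free, I obtain $\gamma^*(-\CI_n^\vee) = [\Gamma^*\CV^\vee \to \Gamma^*\CW^\vee]$, which is $-\Gamma^*(\CI_n^\vee)$ \emph{without} the correction term. The key observation is that the correction does not affect the projectivization: by the short exact sequence $0 \to \CL^{-1} \to \Gamma^*(\CV^\vee) \to \Gamma^*(\CW^\vee)$ recalled from \cite{Hecke}, the subsheaf $\CL^{-1}$ is exactly the kernel of $\Gamma^* \CV^\vee \to \Gamma^* \CW^\vee$, so (after $\otimes \omega_S$) the defining cosection $\rho^*(\Gamma^*\CV^\vee \otimes \omega_S) \to \CO(1)$ factors through the quotient $\rho^*((\Gamma^* \CV^\vee/\CL^{-1}) \otimes \omega_S)$ and has the same image ideal. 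Hence the two complexes cut out the same closed subscheme of $\BP(\Gamma^*\CW^\vee \otimes \omega_S)$, carrying the same $\CO(1)$; passing to the quotient $\Gamma^*\CV^\vee/\CL^{-1}$ is precisely what renders the cosection injective, so that the remark after \eqref{eqn:equation} applies and the projectivization is the expected local complete intersection of dimension $2n+1$, consistent with the smoothness invoked in the first step. This completes the identification \eqref{eqn:pi plus}.
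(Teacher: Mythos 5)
The paper itself gives no proof of this proposition: it is imported wholesale from \cite{Hecke}, so your argument has to stand on its own. Its formal two-thirds are correct and well conceived. The cosection construction \eqref{eqn:subscheme} does commute with base change (pullback of the image ideal is the image of the pulled-back cosection, by right-exactness of pullback), so Proposition \ref{prop:1 minus} base-changed along $\gamma = (p_- \times \Id_S)\circ \Gamma$ identifies the fiber product $\Hilb_{n-1,n}\times_{\Hilb_n \times S}\Hilb_{n,n+1}$ with $\BP_{\Hilb_{n,n+1}}\big(\gamma^*(-\CI_n^\vee\otimes\omega_S)\big)$, carrying $\CO(-1)$ to $\CO(-1)$. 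Your reconciliation of the two complexes is also exactly right: since $\CL^{-1}$ is the kernel of $\Gamma^*(\CV^\vee)\to\Gamma^*(\CW^\vee)$, the cosection from $\rho^*(\Gamma^*(\CV^\vee)\otimes\omega_S)$ factors through the quotient by $\CL^{-1}\otimes\omega_S$ with the same image, so both complexes cut out the same closed subscheme of $\BP(\Gamma^*(\CW^\vee)\otimes\omega_S)$; the correction term only changes the $K$-theory class, i.e.\ the expected codimension, which is what makes \eqref{eqn:push pi} come out as stated. Likewise $\CL' = \pi_-^*\CL$ gives the $\CO(-1)$ claim once the main isomorphism is in place.

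The genuine gap is your first step, and it is not a side issue: granting the formal steps above, the scheme-theoretic identity $\Hilb_{n-1,n,n+1}\cong\Hilb_{n-1,n}\times_{\Hilb_n\times S}\Hilb_{n,n+1}$ is \emph{equivalent} to the proposition, so all of its geometric content is concentrated there, and you verify it only on closed points. The justification you offer cannot close it: the local description and smoothness of $\Hilb_{n-1,n,n+1}$ in \cite{W surf} are statements about $\Hilb_{n-1,n,n+1}$, not about the fiber product, and a bijection on closed points from a smooth variety to a possibly non-reduced scheme is far from an isomorphism. Nor can any transversality argument be salvaged: the expected dimension of this fiber product is $\dim\Hilb_{n-1,n}+\dim\Hilb_{n,n+1}-\dim(\Hilb_n\times S) = 2n$, while its actual dimension is $2n+1$ (the fiber product is supported over $\CZ_n\subset\Hilb_n\times S$, exactly where the fibers of $\gamma$ become positive-dimensional), so the intersection has excess dimension and your phrase ``smooth of the expected dimension $2n+1$'' is a misnomer — $2n+1$ is the actual, not the expected, dimension. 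To finish, you must either know that the scheme structure on $\Hilb_{n-1,n,n+1}$ used in \cite{W surf} is \emph{defined} as this fiber product — equivalently, that it represents the functor of flat families of flags $I\supset_x I'\supset_x I''$ with common support section, which would make your step 1 tautological — or else run a representability/local-equations argument identifying the projectivization with that functor; the latter is precisely the content of the proof in \cite{Hecke}, and it is the part your proposal presupposes rather than supplies.
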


\noindent In both \eqref{eqn:p plus} and \eqref{eqn:pi plus}, we considered projectivization $\BP(*)$ where $*$ is written as a $K$--theory class instead of as a complex of sheaves. The reason for this is that we are only interested in $*$ inasmuch as it helps us compute push-forwards. In fact, the definition of Chern/Segre classes implies that we have, for all $k \geq 0$:
\begin{equation}
\label{eqn:push p}
(p_+ \times p_S)_*(c_1(\CL)^k) = (-1)^k c_{k+2} \left(\CI_n \otimes \omega_S^{-1}\right)
\end{equation}
\begin{equation}
\label{eqn:push pi}
\pi_{+*}(c_1(\CL')^k) = (-1)^k c_{k+1} \left(\Gamma^*(\CI_n) \otimes \omega_S^{-1} - \CL \otimes \omega_S^{-1} \right)
\end{equation}

\subsection{} Our reason for defining the smooth schemes $\Hilb_{n-1,n,n+1}$ is that it allows us to produce a resolution of the singular scheme $\Hilb_{n,n'}$, for any $n<n'$, in the following sense. Consider the following diagram of spaces and maps (\cite{Hecke}):
$$
\xymatrix{& \Hilb_{n,n+1,n+2} \ar[ld]_{\pi_-} \ar[rd]^{\pi_+} & & \Hilb_{n'-2,n'-1,n'} \ar[ld]_{\pi_-} \ar[rd]^{\pi_+} & & \\
\Hilb_{n,n+1} \ar[d]_{p_-} & & \dots & & \Hilb_{n'-1,n'} \ar[d]^{p_+ \times p_S} \\
\Hilb_n & & & & \Hilb_{n'} \times S}
$$
for all $n<n'$. Then we have the following formula (\cite{MN}):
\begin{equation}
\label{eqn:resolution}
\fq_k = (p_+ \times p_S)_* \circ (\pi_{+*} \circ \pi_-^*)^{k-1} \circ p_-^*
\end{equation}
Indeed, the right-hand side of \eqref{eqn:resolution} is a $2n+k+1$ dimensional cycle $\mathfrak{C}$ supported on the $2n+k+1$ dimensional locus $\Hilb_{n,n+k}$. It is well-known that the latter locus has a single irreducible component of top dimension, namely the closure of the locus $U$ of pairs $(I \supset I')$ where $I/I'$ is isomorphic to a length $k$ subscheme of a curve supported at a single point. But in this case, there exists a unique full flag of ideals $I = I_0 \supset I_1 \supset ... \supset I_k = I'$, which implies that $\mathfrak{C}|_U \cong U$, hence \eqref{eqn:resolution} follows. \\

\subsection{} For any $t \geq 0$, define the universal subring:
\begin{equation}
\label{eqn:taut subring 1}
A^*_\taut(\Hilb_n \times S^t) \subset A^*(\Hilb_n \times S^t)
\end{equation}  
as the subring generated by the classes \eqref{eqn:universal} for all $k \geq t$, where one replaces $\pi$ by the map $\Hilb_n \times S^{k} \rightarrow \Hilb_n \times S^t$ which forgets the last $k-t$ factors. Then we let:
\begin{align} 
&A^*_\taut(\Hilb_{n,n+1} \times S^t) \subset A^*(\Hilb_{n,n+1} \times S^t) \label{eqn:taut subring 2} \\
&A^*_\taut(\Hilb_{n-1,n,n+1} \times S^t) \subset A^*(\Hilb_{n-1,n,n+1} \times S^t) \label{eqn:taut subring 3}
\end{align} 
be the subrings generated by $c_1(\CL)$ (respectively $c_1(\CL')$) and the pull-backs of all universal classes via the following maps, respectively: 
\begin{align*} 
&p_- \times \Id_{S^t} : \Hilb_{n,n+1} \times S^t \rightarrow \Hilb_n \times S^t \\
&\pi_- \times \Id_{S^t} : \Hilb_{n-1,n,n+1} \times S^t \rightarrow \Hilb_{n-1,n} \times S^t
\end{align*} 
With this in mind, Proposition \ref{prop:preserve} is a consequence of \eqref{eqn:resolution} and the following: \\

\begin{proposition}
\label{prop:final}
	
For any $t \geq 0$, the maps $(p_- \times \emph{Id}_{S^t})^*$, $(\pi_- \times \emph{Id}_{S^t})^*$, $(\pi_{+} \times \emph{Id}_{S^t})_*$, $(p_+ \times p_S \times \emph{Id}_{S^t})_*$ preserve the universal subrings, as defined above. \\

\end{proposition}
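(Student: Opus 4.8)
The plan is to handle the four maps in turn, exploiting the concrete geometric descriptions in Propositions \ref{prop:1 minus} and \ref{prop:2 minus}, together with the projective-bundle structure they provide. Throughout, the crucial structural fact is that all four maps have image or source that is a projectivization of a $K$-theory class built from the universal ideal sheaf, and that the line bundles $\CL$, $\CL'$ are the tautological $\CO(-1)$ on these projectivizations. Since the universal subrings \eqref{eqn:taut subring 2} and \eqref{eqn:taut subring 3} are \emph{defined} to be generated by $c_1(\CL)$ (respectively $c_1(\CL')$) together with pullbacks of universal classes, the two pullback maps $(p_- \times \Id_{S^t})^*$ and $(\pi_- \times \Id_{S^t})^*$ are immediate: by definition they send universal classes on $\Hilb_n \times S^t$ (resp.\ $\Hilb_{n-1,n} \times S^t$) into the generating set of the target subring, and since pullback is a ring homomorphism, the whole subring is preserved.

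The substance lies in the two pushforward maps. First I would treat $(p_+ \times p_S \times \Id_{S^t})_*$. Using the isomorphism \eqref{eqn:p plus} identifying $\Hilb_{n-1,n}$ with $\BP_{\Hilb_n \times S}(-\CI_n^\vee \otimes \omega_S)$ and $\CL \cong \CO(-1)$, the pushforward is the Gysin map of a projective bundle. A general element of the subring \eqref{eqn:taut subring 2} is a sum of terms of the form $c_1(\CL)^k \cdot (p_- \times \Id_{S^t})^*(\alpha)$ with $\alpha$ universal. By the projection formula, the pushforward equals $\alpha$-twisted Segre-class contributions, and the key computation is exactly \eqref{eqn:push p}: the pushforward of powers of $c_1(\CL)$ produces Chern classes of $\CI_n \otimes \omega_S^{-1}$. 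Since $\ch_j(\CO_{\CZ_n})$ determines (and is determined by) the Chern classes of $\CI_n = \CO_{\Hilb_n \times S} - \CO_{\CZ_n}$ in $K$-theory, these output classes are themselves universal — one must arrange the support condition \eqref{eqn:support}, which holds because the relevant Chern classes are supported on $\CZ_n$. Thus the output lands in $A^*_\taut(\Hilb_n \times S^t)$ (after accounting for the extra $S$-factor coming from $p_S$, which is absorbed into a larger $S^{t+1}$).

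The map $(\pi_+ \times \Id_{S^t})_*$ is handled identically, now invoking \eqref{eqn:pi plus} and the pushforward formula \eqref{eqn:push pi}: powers of $c_1(\CL')$ push forward to Chern classes of $\Gamma^*(\CI_n)\otimes \omega_S^{-1} - \CL \otimes \omega_S^{-1}$, and since $c_1(\CL)$ is already a generator of the subring \eqref{eqn:taut subring 2} on $\Hilb_{n,n+1}$, these outputs lie in the correct universal subring on the target. I expect the main obstacle to be bookkeeping rather than conceptual: one must verify that the support condition \eqref{eqn:support} propagates correctly through the pushforwards, and that restricting along the graph $\Gamma$ (which identifies the diagonal copy of $S$) does not destroy universality. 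Concretely, the delicate point is checking that the Chern classes appearing in \eqref{eqn:push p}–\eqref{eqn:push pi}, once twisted by the pulled-back universal class $\alpha$, can genuinely be rewritten in the form \eqref{eqn:universal} with coefficients pulled back from $A^*(S^k)$ and with proper support — this is where the identification of $\ch_j(\CO_{\CZ_n})$ with Chern classes of $\CI_n$, and the flatness of $\CZ_n$ over $\Hilb_n$, must be used to guarantee both well-definedness of the pushforward and membership in the subring.
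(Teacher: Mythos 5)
Your handling of the two pull-back maps matches the paper's (both are immediate from definitions \eqref{eqn:taut subring 2}--\eqref{eqn:taut subring 3}), and you correctly identify \eqref{eqn:p plus}, \eqref{eqn:pi plus} and the push-forward formulas \eqref{eqn:push p}, \eqref{eqn:push pi} as the computational engine. But there is a genuine gap at the heart of both push-forward arguments: you invoke the projection formula on terms of the form $c_1(\CL)^a \cdot (p_- \times \Id_{S^t})^*(\alpha)$, yet the factor $(p_- \times \Id_{S^t})^*(\alpha)$ is pulled back along $p_-$, \emph{not} along the map $p_+ \times p_S \times \Id_{S^t}$ being pushed forward, so the projection formula says nothing about such a term. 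Concretely, $\alpha$ is a polynomial in Chern classes of $\CZ_{n-1}$ (the universal subscheme over the smaller Hilbert scheme), while membership in the target subring requires an expression in Chern classes of $\CZ_n$ (over the larger one), and nothing in your argument converts one into the other. The phrase ``$\alpha$-twisted Segre-class contributions'' hides exactly the step that needs proof.

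The missing ingredient is the paper's short exact sequence \eqref{eqn:short} and its analogue on $\Hilb_{n-1,n} \times S$:
$$
0 \rightarrow \CL \otimes (p_S \times \Id_S)^*(\CO_\Delta) \rightarrow \CO_{\CZ_n} \rightarrow \CO_{\CZ_{n-1}} \rightarrow 0
$$
which gives the K-theoretic identity $\CO_{\CZ_{n-1}} = \CO_{\CZ_n} - \CL \otimes (p_S \times \Id_S)^*(\CO_\Delta)$ \emph{across the correspondence}. This is what allows any polynomial in $\ch_j(\CO_{\CZ_{n-1}^{(i)}})$ to be rewritten as $\sum_a c_1(\CL)^a \cdot (p_+ \times p_S \times \Id_{S^k})^*(R_a)$ with $R_a$ universal on the target (the paper's \eqref{eqn:expression 1}, \eqref{eqn:expression 2}); only after this rewriting does the projection formula apply and produce the classes \eqref{eqn:push p}, \eqref{eqn:push pi}. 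Your appeal to the identification of $\ch_j(\CO_{\CZ_n})$ with Chern classes of $\CI_n$ is a relation on the single space $\Hilb_n \times S$ and cannot substitute for this comparison between $\CZ_{n-1}$ and $\CZ_n$ on the nested Hilbert scheme. Relatedly, the delicate support point is not the one you flag: for $(p_+ \times p_S \times \Id_{S^t})_*$ the output must in addition be supported where the $p_S$-coordinate $x$ lies in $\text{supp } I_n$ (a condition absent from the source data), and the paper rescues this by noting that $(p_+ \times p_S)_*(c_1(\CL)^a) = (-1)^a c_{a+2}(\CI_n \otimes \omega_S^{-1})$ vanishes off $\CZ_n$, because $\CI_n$ is a line bundle there and $c_{a+2}$ of a line bundle vanishes for $a \geq 0$.
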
 

\noindent Indeed, formula \eqref{eqn:resolution} and Proposition \ref{prop:final} imply that if $x \in A^*_{\taut}(\Hilb)$, then $y = \fq_{k_1}...\fq_{k_t}(x) \in A^*_{\taut}(\Hilb \times S^t)$, in the sense of \eqref{eqn:taut subring 1}. If we multiply $y$ by the pull-back of any $\Gamma \in A^*(S^t)$ and then push it forward to $\Hilb$, it will remain in the subring of universal classes, and this establishes Proposition \ref{prop:preserve}. \\

\begin{proof} \emph{of Proposition \ref{prop:final}:} The statements about the pull-back maps $(p_- \times \Id_{S^t})^*$ and $(\pi_- \times \Id_{S^t})^*$ preserving the universal rings are obvious given definitions \eqref{eqn:taut subring 2} and \eqref{eqn:taut subring 3}. Concerning the push-forward $(\pi_{+} \times \Id_{S^t})_*$, we must show that:
\begin{equation}
\label{eqn:show 1}
x \in A^*_\taut(\Hilb_{n-1,n,n+1} \times S^t) \Rightarrow (\pi_{+} \times \Id_{S^t})_*(x) \in A^*_\taut(\Hilb_{n,n+1} \times S^t)
\end{equation}
We have the following short exact sequence on $\Hilb_{n-1,n,n+1} \times S$:
\begin{equation}
\label{eqn:short}
0 \rightarrow \CL' \otimes (p_S \times \Id_S)^*(\CO_\Delta) \rightarrow \CO_{\mathcal{Z}_n} \rightarrow \CO_{\mathcal{Z}_{n-1}} \rightarrow 0 
\end{equation}
where $\Delta \subset S \times S$ is the diagonal, and $p_S : \Hilb_{n-1,n,n+1} \rightarrow S$ is the map which remembers the point $x$ in \eqref{eqn:punctual}. Then for any polynomial $P$ in the Chern classes of $\CZ_{n-1}^{(i)}$, whose coefficients are pulled back from $\Hilb_{n,n+1} \times S^k$ to $\Hilb_{n-1,n,n+1} \times S^k$:
$$
P(...,\ch_{j}(\CO_{\CZ^{(i)}_{n-1}}),...)^{1 \leq i \leq k}_{j \in \BN} = P(...,\ch_{j}(\CO_{\CZ^{(i)}_{n}} - \CL' \otimes (p_S \times \text{proj}_i)^*(\CO_\Delta)),...)^{1 \leq i \leq k}_{j \in \BN} = 
$$
\begin{equation}
\label{eqn:expression 1}
= \sum_{a=0}^\infty c_1(\CL')^a \cdot (\pi_{+} \times \Id_{S^k})^*(R_a) \ \in \  A^*(\Hilb_{n-1,n,n+1} \times S^k)
\end{equation}
for various $R_a \in A^*(\Hilb_{n,n+1} \times S^k)$ which are also polynomials in the Chern classes of the universal subschemes. If we apply $(\pi_+ \times \Id_{S^k})_*$ to \eqref{eqn:expression 1}, we obtain:
\begin{equation}
\label{eqn:temp 1}
(\pi_+ \times \Id_{S^k})_* \left[ P(...,\ch_{j}(\CO_{\CZ^{(i)}_{n-1}}),...)^{1 \leq i \leq k}_{j \in \BN} \right] = \sum_{a=0}^\infty \pi_{+*} (c_1(\CL')^a) \cdot R_a
\end{equation}
Letting $\int_{S^{k-t}}$ denote the push-forward along the last $k-t$ factors of $S^k$, we have:
$$
(\pi_+ \times \Id_{S^t})_* \underbrace{\int_{S^{k-t}} P(...,\ch_{j}(\CO_{\CZ^{(i)}_{n-1}}),...)^{1 \leq i \leq k}_{j \in \BN}}_{x \text{ from the LHS of \eqref{eqn:show 1} is of this form}} = \sum_{a=0}^\infty \pi_{+*} (c_1(\CL')^a) \int_{S^{k-t}} R_a
$$
The implication \eqref{eqn:show 1} follows because the RHS above is a universal class: this is true for $\pi_{+*}(c_1(\CL')^a)$ because of formula \eqref{eqn:push pi}, and for $\int_{S^{k-t}} R_a$ since $R_a$ is a polynomial in the Chern classes of the universal subscheme. Finally, the support condition \eqref{eqn:support} is satisfied becase the LHS of \eqref{eqn:expression 1} is by assumption supported on the locus $(I_{n-1} \supset_x I_n \supset_x I_{n+1}) \times (x_1,...,x_k) \in \Hilb_{n-1,n,n+1} \times S^k$, where $x_1,...,x_k \in \text{supp }I_{n-1}$, hence the $R_a$'s in the RHS of \eqref{eqn:temp 1} are supported on the locus $(I_n \supset_x I_{n+1}) \times (x_1,...,x_k) \in \Hilb_{n,n+1} \times S^k$, where $x_1,...,x_k \in \text{supp }I_{n}$. \\

\noindent Concerning the push-forward $(p_+ \times p_S \times \Id_{S^t})_*$, we must show that:
\begin{equation}
\label{eqn:show 2}
y \in A^*_\taut(\Hilb_{n-1,n} \times S^t) \Rightarrow (p_+ \times p_S \times \Id_{S^t})_*(y) \in A^*_\taut(\Hilb_{n} \times S \times S^{t})
\end{equation}
By a short exact sequence analogous to \eqref{eqn:short}, for any polynomial $P$ in the Chern classes of $\CZ_{n-1}^{(i)}$, whose coefficients are pulled back from $\Hilb_{n} \times S^k$ to $\Hilb_{n-1,n} \times S^k$:
\begin{multline}
P(...,\ch_{j}(\CO_{\CZ^{(i)}_{n-1}}),...)^{1 \leq i \leq k}_{j \in \BN} = \\ = \sum_{a=0}^\infty c_1(\CL)^a \cdot (p_+ \times p_S \times \Id_{S^k})^*(R_a) \ \in \  A^*(\Hilb_{n-1,n} \times S^k) \label{eqn:expression 2}
\end{multline}
for various $R_a \in A^*(\Hilb_{n} \times S \times S^{k})$ which are polynomials in the Chern classes of the universal subschemes. If we apply $(p_+ \times p_S \times \Id_{S^k})_*$ to \eqref{eqn:expression 2}, we obtain:
\begin{equation}
\label{eqn:temp 2}
(p_+ \times p_S \times \Id_{S^k})_* P(...,\ch_{j}(\CO_{\CZ^{(i)}_{n-1}}),...)^{1 \leq i \leq k}_{j \in \BN} = \sum_{a=0}^\infty (p_+ \times p_S)_* (c_1(\CL)^a) \cdot R_a
\end{equation}
hence:
$$
(p_+ \times p_S \times \Id_{S^t})_* \underbrace{\int_{S^{k-t}} P(...,\ch_{j}(\CO_{\CZ^{(i)}_{n-1}}),...)^{1 \leq i \leq k}_{j \in \BN}}_{y \text{ from the LHS of \eqref{eqn:show 2} is of this form}} = \sum_{a=0}^\infty (p_+ \times p_S)_* (c_1(\CL)^a) \int_{S^{k-t}} R_a
$$
The implication \eqref{eqn:show 2} follows because the RHS above is a universal class: this is true for $(p_+ \times p_S)_*(c_1(\CL)^a)$ because of formula \eqref{eqn:push p}, and for $\int_{S^{k-t}} R_a$ since $R_a$ is a polynomial in the Chern classes of the universal subscheme. However, checking the support condition is non-trivial, so let us explain this. By assumption, the LHS of \eqref{eqn:expression 2} is supported on the locus of points $(I_{n-1} \supset_x I_n) \times (x_1,...,x_k) \in \Hilb_{n-1,n} \times S^k$, where $x_1,...,x_k \in \text{supp } I_{n-1}$. Therefore, the $R_a$'s which appear in the RHS of \eqref{eqn:temp 2} are supported on the locus of points $(I_n,x) \times (x_1,...,x_k) \in \Hilb_{n} \times S \times S^k$, where $x_1,...,x_k \in \text{supp } I_{n}$. However, a universal class also needs to be supported on the locus of points where $x \in \text{supp }I_n$, but we are rescued because:
$$
(p_+ \times p_S)_* (c_1(\CL)^a) = (-1)^a c_{a+2} \left(\CI_n \otimes \omega_S^{-1}\right)
$$
vanishes on the locus $x \notin \text{supp }I_n$ (this holds because the universal ideal sheaf $\CI$ is trivial on the locus $x \notin \text{supp }I_n$, and $c_{a+2}(\text{line bundle}) = 0$ for $a \geq 0$). 
\end{proof}

\end{document}